%%%%%%%%%%%%%%%%%%%%%%%%%%%%%%%%%%%%%%%%%%%
%% 13.12.2010
%%
%%%%%%%%%%%%%%%%%%%%%%%%%%%%%%%%%%%%%%%%%%%

\documentclass[reqno]{amsart}

\pagenumbering{arabic}
\addtolength{\textheight}{-4mm}
\topmargin6mm

\title[Koppelman formulas on flag manifolds]{Koppelman formulas on flag manifolds}

\author{H\aa kan Samuelsson \& Henrik Sepp\"{a}nen}
\thanks{The second author was supported by a postdoctoral fellowship from the 
Swedish Research Council.}

\keywords{Integral formula, holomorphic vector bundle, flag manifold, Lie group}
\subjclass[2000]{32A26, 32L10, 32M10, 32M05}
\date{\today}

\address{H. Samuelsson, Matematisk Institutt, Universitetet i Oslo, Postboks 1053 Blindern, 0316
Oslo, Norway}
\email{haakan.samuelsson@gmail.com}

\address{Henrik Sepp\"{a}nen,Universit\"{a}t Paderborn,
Fakult\"{a}t f\"{u}r Elektrotechnik, Informatik und Mathematik,
Institut f\"{u}r Mathematik, Warburger Str. 100
33098 Paderborn}
\email{henriksp@math.upb.de}

\usepackage[english]{babel}
\usepackage{amsmath}
\usepackage{amsthm,amssymb,latexsym}
\usepackage{t1enc}
\usepackage{mathrsfs}
\usepackage{graphicx}
\usepackage{xypic}

\newtheorem{proposition}{Proposition}
\newtheorem{theorem}[proposition]{Theorem}

\theoremstyle{definition}
\newtheorem{definition}[proposition]{Definition}

\newtheorem{remark}[proposition]{Remark}

\newcommand{\C}{\mathbb{C}}

\newcommand{\debar}{\bar{\partial}}

\newcommand{\Z}{\mathbb{Z}}

\newcommand{\z}{\zeta}

\newcommand{\Hom}{\textrm{Hom}}

\def\newop#1{\expandafter\def\csname #1\endcsname{\mathop{\rm #1}\nolimits}}
\newop{span}

\begin{document}
\nocite{*}
\bibliographystyle{plain}

\begin{abstract}
We construct Koppelman formulas on manifolds of flags in $\C^N$ for forms
with values in any holomorphic line bundle as well as in the tautological 
vector bundles and their duals. As an application we obtain new explicit proofs of some 
vanishing theorems of the Bott-Borel-Weil type
by solving the corresponding $\debar$-equation.
We also construct reproducing kernels for harmonic $(p,q)$-forms 
in the case of Grassmannians.
\end{abstract}

\maketitle
\thispagestyle{empty}

\section{Introduction}
The classical Koppelman formula is an integral formula
of the kind
\begin{equation}\label{huvudformel}
\varphi(z)=\int_{\partial \Omega} K\wedge \varphi+
\int_{\Omega} K\wedge \debar \varphi + \debar_z \int_{\Omega} K\wedge \varphi 
+ \int_{\Omega} P\wedge \varphi,
\end{equation}
which represents differential forms on a smoothly
bounded domain $\Omega \subseteq \C^n$.
Here $K$ is an integrable differential form on $X \times X$, and
$P$ is a smooth one.
The formula \eqref{huvudformel} is equivalent to the
equation of currents
\begin{equation}
\debar K=[\Delta]-P \label{huvudekv},
\end{equation}
where $[\Delta]$ is the $(n,n)$-current given by integration
over the diagonal $\Delta \subseteq \C^n \times \C^n$.

In this paper we consider the case of the complex manifold $X$ of all
flags $V_1 \subseteq \cdots \subseteq  V_k$ of linear subspaces of 
$\C^N$ of a given type, and construct integral formulas for smooth 
differential forms with values in certain holomorphic vector bundles
$V \rightarrow X$.
To be more precise we construct integral kernels $K$ and $P$,
integrable and smooth respectively, so that 
the formula \eqref{huvudekv}
holds for any smoothly bounded domain $\Omega\subseteq X$ and
smooth $(p,q)$-form 
on $\overline{\Omega}$ with values in the holomorphic line bundle
$\mathcal{L} \rightarrow X$ (or in any of the tautological vector bundles).
Here $K$ is an integrable section of the bundle 
$V \boxtimes V^* \otimes \wedge T^*_{\C} 
\rightarrow X \times X$, and $P$
is a smooth section of the same bundle. In this setting
the current $[\Delta]$ is given by the tensor
product of the integration current associated to 
$\Delta \subseteq X \times X$ and the identity 
section of $(V \boxtimes V^*)|_{\Delta}$.

We apply the formula \eqref{huvudformel} to prove
certain vanishing theorems for Dolbeault cohomology
groups with values in line bundles. 
Indeed, if we let $\Omega$ to be equal to $X$, the
boundary term on the right hand side of the formula
vanishes. Therefore $\debar$-closed forms $\varphi$
are represented by the third plus the fourth term, 
and hence the third terms provides a solution to
the $\debar$-equation if one can prove that the
fourth term vanishes.

The approach of this paper is a straightforward generalization 
of the one in \cite{GSS} which deals with Grassmannians. That paper in 
turn is an application of (a slight generalization of) the
method developed in \cite{elin} for constructing integral
formulas on complex manifold. The key ingredient is to 
have a manifold $X$ admitting the \emph{Diagonal Property}, 
i.e., that $X \times X$ admits a holomorphic vector bundle
$E \rightarrow X \times X$ of rank equal to the dimension of
$X$, and a holomorphic section $\eta$ of $E$ which
vanishes to the first order on the diagonal 
$\Delta \subseteq X \times X$ and is nonzero elsewhere. 
We will therefore be very brief and refer to \cite{GSS}
and \cite{elin} for the framework in which our contructions
take place, as well as for a historical background on 
integral formulas.

The paper is organized as follows. In Section 2 we recall some
preliminaries on flag manifolds and related vector bundles. 
In Section 3 we show that $X$ admits the Diagonal property, 
and use this to give a Koppelman formula for scalar valued
differential forms. In Section 4 we construct weights which allow
us to give integral formulas for forms with values in vector 
bundles. Section 5 is concerned with applications. We
prove vanishing theorems for Dolbeault cohomology 
groups, and, in the special case of Grassmannians, we also 
identify the fourth term on the right hand side in 
\eqref{huvudformel} with the reproducing kernel for the space 
of harmonic $(p,q)$-forms.

{\bf Acknowledgement:} We would like to thank Professor P. Pragacz for 
having sent us a preprint version of his paper \cite{pragacz} on the Diagonal 
Property.

\section{Preliminaries}

\subsection{Flag manifolds}
A flag $\mathbf{F}$ of type $\underline{d}:=(d_1,\ldots,d_k)$ is a $k$-tuple 
$\mathbf{F}=(V_1,\ldots,V_k)$ of subspaces $V_1 \subset V_2 \subset \ldots \subset V_k$ of 
$\C^N$, where  $\mbox{dim} V_i=d_i$, and $V_k=\C^N$.
We fix the flag $\mathbf{F}_0=(V^0_1,\ldots, V^0_k)$, with 
$V^0_i=\mbox{Span}_{\C}\{e_1, \ldots, e_{d_i}\}$, 
where $\{e_1, \ldots, e_N\}$ is the standard basis for $\C^N$.

Let $X$ be the set of all flags of subspaces of type $(d_1,\ldots,d_k)$ of $\C^N$.
The group $G:=GL(N,\C)$ then acts transitively on $X$, and the stabilizer of the 
reference point $(V^0_1,\ldots, V^0_k)$ is 
\begin{equation*}
P:=\left\{ \left(\begin{array}{ccc}
A_1 & * & *\\
0 & \ddots & *\\
0 & \cdots & A_k
\end{array}\right)
\in GL(N,\C)
\right\}
\end{equation*}
where $A_1$ is a block of size $d_1 \times d_1$, and $A_i$ is a block
of size $(d_i-d_{i-1}) \times (d_i-d_{i-1})$ for $i=2,\ldots,k$.

On the level of Lie algebras, we have the decomposition
\begin{eqnarray*}
\mathfrak{g}=\mathfrak{n} \oplus \mathfrak{p},
\end{eqnarray*}
with 
\begin{eqnarray*}
\mathfrak{p}&:=&\left\{ \left(\begin{array}{ccc}
A_1 & * & *\\
0 & \ddots & *\\
0 & \cdots & A_k
\end{array}\right)
\in M_{NN}(\C)
\right\},\\
\mathfrak{n}&:=&
\left\{ \left(\begin{array}{cccc}
0 & 0 & \cdots & 0\\
B_{11} & 0 & \cdots & 0\\
\vdots & \ddots & \cdots &0\\
B_{(k-1)\,1} & \cdots & B_{(k-1)\,(k-1)} & 0
\end{array}\right)
\in M_{NN}(\C)
\right\},
\end{eqnarray*}
where $B_{i1}$ is a block of size $(d_{i+1}-d_i) \times d_1$, and 
$B_{ij}$ is a block of size $(d_{i+1}-d_i) \times (d_j-d_{j-1})$,
$j=2,\ldots,k-1$.
From this, we see that $X$ is a complex manifold of dimension
\begin{equation*}
n:=\mbox{dim}_{\C} \mathfrak{n}=
\sum_{j=1}^{k-1}(N-d_j)(d_j-d_{j-1})=\sum_{j=1}^{k-1} d_j(d_{j+1}-d_j).
\end{equation*}
Here we use the convention that $d_0=0$.
\subsection{Vector bundles over $X$}
For each $i=1,\ldots,k$, we have a tautological holomorphic vector bundle
\begin{equation*}
H_i:=\{(\mathbf{F},v) \in X \times \C^N |\, v \in V_i \},
\end{equation*}
and a corresponding quotient bundle $F_i:= X \times \C^N/H_i$.
We let $q_i: X \times \C^N \rightarrow F_i$ denote the holomorphic projection map, 
and 
\begin{equation}\label{storkvot}
Q_i: F_i \rightarrow F_{i+1}, \quad v + H_i \mapsto v + H_{i+1}
\end{equation}
be the natural quotient map.

Note that the restriction of the Euclidean metric on $\C^N$ equips each 
$H_i$ with a Hermitian metric which is invariant under the action of
the group $U(N)$. Moreover, we can indentify the quotient bundle $F_i$ 
with the orthogonal complement, $H_i^{\perp}$ of $H_i$.
We let $\varphi_i:F_i \rightarrow H_i^{\perp}$ denote this identification, the
inverse of which is the restriction of $q_i$ to $H_i^{\perp}$.
These identifications allow us to define $U(N)$-invariant Hermitian metrics on the
$F_i$.
In the sequel, we will be concerned with the Cartesian product $X \times X$.
We will denote points in the first component by $z$, and points in the second
component by $\zeta$. We let $H_{i,z} \rightarrow X \times X$ denote the 
the pullback bundle of $H_i$ with respect to the projection of $X \times X$ onto
the first component. Analogously, we define $H_{i, \zeta}$ and the correponding
constructions for the $F_i$. Finally, we let $q_{i,z}: X \times X \times \C^N \rightarrow
F_{i,z}$ denote the natural quotient morphism.

\subsection{The Picard group of $X$}
Since $X$ can be described as quotient of $G$ by a parabolic subgroup, it
follows that any holomorphic line bundle over $X$ is homogeneous
under $G$, i.e., corresponds to a holomorphic character $\chi:P \rightarrow \C^{\times}$
(cf. \cite{snow}).
It is well known that the group of holomorphic characters of $P$ 
is generated by the characters $\chi_1, \ldots, \chi_k$, where
\begin{equation*}
\chi_i(P)=\det A_1 \cdots \det A_i,
\end{equation*}
where $P=LU$ is the Levi decomposition of $P$ with 
\begin{eqnarray*}
L&:=&\left\{ \left(\begin{array}{ccc}
A_1 & 0 & 0\\
0 & \ddots & 0\\
0 & \cdots & A_k
\end{array}\right)
\in GL(N,\C)
\right\},\\
U&:=&\left\{ \left(\begin{array}{ccc}
I_{d_1} & * & *\\
0 & \ddots & *\\
0 & \cdots & I_{(d_k-d_{k-1})\,(d_k-d_{k-1})}
\end{array}\right)
\in GL(N,\C)
\right\}.
\end{eqnarray*}

The corresponding homogeneous line bundle 
\begin{equation*}
L_i:=G \times_{\chi_i} \C \label{generator}
\end{equation*}
is equivalent to $\det H_i:=\wedge^{d_i}H_i$ by
the morphism of $G$-homogeneous line bundles given by
\begin{equation*}
[(g,v)] \mapsto g.v,
\end{equation*}
where the $G$-action on the right hand side is the linear action on $\wedge^{d_i} \C^N$
induced from the standard action on $\C^N$.
The dual line bundles $L_i^{-1}:=L_i^*$ are ample, and we have isomorphisms
$\wedge^{d_i} \C^N \rightarrow H^0(X,L_i^{-1})$ given by
\begin{eqnarray*}
v &\mapsto & s_v,\\
s_v(z).u&:=&\langle u,v \rangle, \quad v \in \wedge^{d_i} \C^N, 
u \in \wedge^{d_i} H_{i,z},
\end{eqnarray*}
where the pairing $\langle \cdot, \cdot \rangle$ is given by the natural inner 
product on $\wedge^{d_i} \C^N$ induced from the one on $\C^N$.

\section{The Diagonal Property}
Consider the holomorphic vector bundle $\sum_{i=1}^{k-1} F_{i,z} \otimes H_{i,\zeta}^*$, 
which has rank $\sum_1^{k-1}d_j(N-d_j)$. We define a holomorphic section $\eta$ of this bundle by
\begin{eqnarray*}
\eta(z,\z)&:=&\sum_{i=1}^{k-1}\eta_i(z,\zeta), \\
\eta_i(z,\zeta)\, v&:=&q_{i,z}(v), \quad v\in H_{i,\zeta}\subset \C^N. \nonumber
\end{eqnarray*}
Then $\eta$ defines the diagonal $\Delta \subset X\times X$, i.e., $\eta$ vanishes to first order along
$\Delta$ and is non-zero elsewhere. Moreover, $\eta$ is a section of the subbundle $E:=\ker \Phi$, where
\begin{eqnarray*}
\Phi: \sum_{i=1}^{k-1}\Hom((H_i)_{\z},(F_i)_z)
&\rightarrow & \sum_{i=1}^{k-2}\mbox{Hom}((H_i)_{\z},(F_{i+1})_z), \\ \nonumber
\Phi(\sum_{i=1}^{k-1} f_i)&:=&\sum_{i=1}^{k-2} f_{i+1}|_{(H_i)_{\zeta}}-Q_{i} \circ f_i.
\end{eqnarray*}

The map $\Phi$ is surjective, and it follows that 
$E$ is of rank $n$ ($=\mbox{dim}\, X$). We thus have
\begin{proposition}
The section $\eta$ is a holomorphic section of the rank-$n$ bundle $E$ and 
$\eta$ defines the diagonal $\Delta \subset X \times X$.
\end{proposition}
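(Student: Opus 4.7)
The proposition consists of three claims: $\eta$ lies in $\ker\Phi$, $E$ has rank $n$, and $\eta$ vanishes to first order precisely on $\Delta$. The first claim is routine: for $v\in H_{i,\zeta}\subset H_{i+1,\zeta}$ the identity $q_{i+1,z}=Q_i\circ q_{i,z}$ gives $\eta_{i+1}(z,\zeta)v - Q_i\eta_i(z,\zeta)v = q_{i+1,z}(v)-Q_iq_{i,z}(v)=0$. For the rank of $E$, my plan is to prove $\Phi$ is pointwise surjective by an inductive lifting: given $(g_1,\ldots,g_{k-2})$ in the target, pick $f_1$ arbitrarily; at each subsequent step the equation $f_{i+1}|_{H_{i,\zeta}}=g_i+Q_i\circ f_i$ determines $f_{i+1}$ only on the subspace $H_{i,\zeta}\subset H_{i+1,\zeta}$ and can therefore always be extended to all of $H_{i+1,\zeta}$. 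A dimension count then yields
\[
\operatorname{rank} E=\sum_{i=1}^{k-1}d_i(N-d_i)-\sum_{i=1}^{k-2}d_i(N-d_{i+1})=\sum_{i=1}^{k-1}d_i(d_{i+1}-d_i)=n,
\]
using $d_k=N$, which matches $\dim X$.

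Set-theoretically, the vanishing locus of $\eta$ is $\Delta$: the condition $\eta_i(z,\zeta)=0$ says $H_{i,\zeta}\subset H_{i,z}$, and equality of the dimensions $d_i$ forces $V_i(z)=V_i(\zeta)$, so $\eta(z,\zeta)=0$ iff $z=\zeta$. The main obstacle I expect is upgrading this to \emph{first-order} vanishing. My plan is to exploit the $GL(N,\C)$-equivariance of the entire construction: the diagonal action preserves $\Delta$ and is transitive on it, so it suffices to verify transversality at the single point $(\mathbf{F}_0,\mathbf{F}_0)$. Since $\eta$ vanishes identically on $\Delta$, the content reduces to showing that its derivative in the anti-diagonal direction is an isomorphism $T_{\mathbf{F}_0}X\cong\mathfrak{n}\to E|_{(\mathbf{F}_0,\mathbf{F}_0)}$; as both sides have dimension $n$, injectivity is enough.

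To carry out this final step, I would differentiate $\eta_i(\mathbf{F}_0,\exp(tX)\mathbf{F}_0)$ at $t=0$ for $X\in\mathfrak{n}$. Writing a varying vector as $\exp(tX)v_0$ with $v_0\in V_i^0=H_{i,\mathbf{F}_0}$, the derivative is the linear map $V_i^0\to\C^N/V_i^0$ sending $v_0\mapsto Xv_0\bmod V_i^0$. If this vanishes for every $i$, then $X(V_i^0)\subset V_i^0$ for each $i$, so $X$ preserves $\mathbf{F}_0$ and thus $X\in\mathfrak{p}$; combined with $X\in\mathfrak{n}$ the decomposition $\mathfrak{g}=\mathfrak{n}\oplus\mathfrak{p}$ forces $X=0$, yielding the required injectivity and completing the plan.
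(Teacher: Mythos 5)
Your proof is correct and follows the same route the paper takes: membership in $\ker\Phi$ via $q_{i+1,z}=Q_i\circ q_{i,z}$, surjectivity of $\Phi$ by inductive lifting plus the dimension count giving rank $n$, and identification of the zero locus with $\Delta$. The paper itself only asserts these facts without detail, so your additional verification of first-order vanishing --- reducing by $GL(N,\C)$-equivariance to the base point and using $\mathfrak{n}\cap\mathfrak{p}=0$ to get injectivity of the transverse derivative --- correctly supplies the one step the paper leaves entirely implicit.
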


\begin{remark}
The construction of $\eta$ occurred to us rather soon as a generalization
of the construction for Grassmannians (cf. \cite{GSS} ). However, we did not
realize that it defines a section of a subbundle of the right rank, and
therefore dismissed it. It was pointed out to us by Prof. Pragacz that it
actually is a section of $E$. We are very grateful to him for this.
\end{remark}

Equip $E$ with the $U(N)$-invariant Hermitian metric (here $U(N)$ is identified with 
the diagonal subgroup of $U(N) \times U(N)$) 
$\langle \cdot, \cdot \rangle_E$ induced from the 
Euclidean metric on $\C^N$ via the tautological vector bundles $H_i$ and
their quotients $F_i$. Let $D_E$ be the Chern connection 
with respect to this metric, and let $\Theta_E$ be
its curvature, and let $\widetilde{D_E}$ and $\widetilde{\Theta_E}$
denote their natural images as sections of the vector bundle
\begin{equation*}
G_E:=\Lambda [T^{\ast}(X \times X) \oplus E \oplus
E^{\ast}] \to X \times X. \label{G_E}
\end{equation*}

Let $\delta_\eta$ be mapping $\wedge E^* \rightarrow \wedge E^*$ 
defined by interior multiplication by $\eta$, and let $\sigma$ be the 
section of $E^*$ given by forming the inner product
with the normalization of $\eta$, i.e.,
\begin{equation*}
\sigma(z,\zeta)(v):=\frac{\langle v,\eta(z,\zeta)\rangle_E}{|\eta(z)|_E^2}.
\end{equation*}
Then $\delta_\eta \sigma=1$ on $X\times X\setminus\Delta$. Moreover, put
$u=\frac{\sigma}{\nabla_\eta \sigma}=
\sum_{k=1}^n \sigma \wedge (\debar \sigma)^{k-1}$, which is a
section of $G_E$.
Finally, let
$\int_E \colon G_E \rightarrow T^*(X \times X)$ be the mapping which 
takes a point to the coefficient of the 
term with degree $n$ in both $E$ and $E^*$.
We then have the following Koppelman formula for scalar
valued differential forms, see Theorem 3.4 in \cite{elin}.
\begin{theorem}
Define $K$ and $P$ by
\begin{equation*}
K(z,\zeta):=\int_E u\wedge \left(\frac{\widetilde{D_E\eta}}{2\pi i}
+\frac{i \widetilde{\Theta}_E}{2\pi}\right)_n, \quad 
P(z,\zeta):=\int_E \left(\frac{\widetilde{D_E\eta}}{2\pi i}
+\frac{i \widetilde{\Theta}_E}{2\pi}\right)_n,
\end{equation*}
where $(\cdot)_n=(\cdot)^n/n!$.
Then $K$ is integrable and $P$ is smooth, and moreover, $K$ and $P$ satisfy the equation of currents
\begin{equation*}
\debar K=[\Delta]-P.
\end{equation*}
\end{theorem}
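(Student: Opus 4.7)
The plan is to derive the statement from the abstract Koppelman theorem (Theorem 3.4 of \cite{elin}), whose hypotheses are assembled by the preceding Proposition together with the ensuing discussion. The section $\eta$ provides a rank-$n$ holomorphic bundle $E \to X \times X$ vanishing to first order on $\Delta$ and nowhere else; the $U(N)$-invariant metric yields $D_E$ and $\Theta_E$; and $\sigma$, $u$, and $\int_E$ are constructed verbatim as in that framework. What remains is to verify in our concrete setting the three assertions: smoothness of $P$, local integrability of $K$, and the current identity $\debar K = [\Delta] - P$.

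Smoothness of $P$ is immediate from the smoothness of $\widetilde{D_E\eta}$ and $\widetilde{\Theta_E}$ and the fact that $P$ is polynomial in them. For the local integrability of $K$, I would pick local holomorphic coordinates $(w,t) \in \C^n \times \C^n$ near a point of $\Delta$ in which $\eta$ is modeled by $w \mapsto w$; this is available because $\eta$ vanishes transversely along the codimension-$n$ diagonal. Then $|\eta|_E^2 \sim |w|^2$, and each summand $\sigma \wedge (\debar \sigma)^{k-1}$ of $u$ has singularity of order $|w|^{-(2k-1)}$, which is integrable in the transverse $\C^n$-factor for $1 \leq k \leq n$.

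The heart of the matter, and the expected main obstacle, is the current identity. Away from $\Delta$ one has $\deta \sigma = 1$, and a direct calculation using Leibniz together with the Bianchi identity $D_E \Theta_E = 0$ shows that $\debar K = -P$ holds pointwise on $X \times X \setminus \Delta$. To capture the singular contribution at $\Delta$, I would introduce the regularization $\sigma^\lambda := |\eta|_E^{2\lambda}\sigma$, obtain smooth forms $K_\lambda$ for $\real \lambda \gg 0$, show that $\lambda \mapsto K_\lambda$ extends as a current-valued holomorphic function to a neighborhood of $\lambda = 0$, and evaluate $\lim_{\lambda \to 0} \debar K_\lambda$. The holomorphic extension is standard via a Hironaka resolution of $|\eta|_E^2$. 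The residue at $\Delta$ is then identified by passing to the normal model at a point of $\Delta$: the piece of the integrand extracted by $\int_E$ restricts, on a small transverse disk, to a form whose integral equals one, producing exactly the current $[\Delta]$ on the right-hand side. This last extraction of the diagonal residue is the technical heart of the argument, and is precisely what Theorem 3.4 of \cite{elin} packages in the stated generality.
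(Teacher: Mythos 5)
Your proposal is correct and follows essentially the same route as the paper, which gives no independent proof but simply invokes Theorem 3.4 of \cite{elin} once the Diagonal Property (the preceding Proposition) supplies the rank-$n$ bundle $E$ and the section $\eta$. Your additional sketch of the internals --- integrability of $u$ from the $|w|^{-(2k-1)}$ estimates, the pointwise identity off $\Delta$, and the $|\eta|_E^{2\lambda}$-regularization producing the residue $[\Delta]$ --- accurately reflects how that cited theorem is proved.
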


Notice that for degree reasons we have 
$P=\int_E (i \widetilde{\Theta}_E/2\pi)_n=\det (i\Theta/2\pi)=c_n(E)$
so that $P$ is the $n$th Chern form of $E$.

\section{Weighted integral formulas}
We recall the definition of a weight for a holomorphic vector bundle $V\to X$.
We first put $G_{E,V}:= \Hom(V_{\zeta},V_z)\otimes G_E$, and
define the operator
\begin{equation*}
\nabla_\eta:=\delta_\eta-\debar
\end{equation*}
on the space of smooth sections of $G_{E,V}$.

\begin{definition}
Let $g=g_{0,0}+\ldots + g_{n,n}$ be a smooth section of $G_{E,V}$, where $g_{k,k}$ is a section of 
$\Lambda^k E^* \wedge T^*_{0,k}(X\times X)$. Then $g$ is called a {\em weight} for $V$
if $\nabla_{\eta}g=0$ and 
$g_{0,0}\mid_{\Delta}=\rm{Id}\in \Hom(V_{\zeta},V_z)\mid_{\Delta}$.
\end{definition}

Now, following \cite{elin}, p.\ 54, one easily shows that
\begin{equation}\label{KoP}
K_g(z,\zeta):=\int_E g \wedge u \wedge \left(\frac{\widetilde{D_E\eta}}{2\pi i}+\frac{i \tilde{\Theta}_E}{2\pi}\right)_n,\,\,
P_g(z,\zeta):=\int_E g \wedge \left(\frac{\widetilde{D_E\eta}}{2\pi i}+\frac{i \tilde{\Theta}_E}{2\pi}\right)_n
\end{equation}
satisfy \eqref{huvudekv}. 

\subsection{Weights for the tautological vector bundles}
To begin with, we define a section $\gamma_{0,i}$ of the bundle $H_i \boxtimes H_i^*$
by
\begin{equation*}
\gamma_{0,i}(z,\zeta)v:=\pi_{H_{i,z}}v, \quad v \in H_{i,\z}.
\end{equation*}
Next, we define $\gamma_{1,i} \in 
\Gamma(X \times X, (H_i \boxtimes H_i^* ) \otimes E^* \otimes T^*_{0,1}$. Let 
$\xi$ and $v$ be (germs of) smooth sections of $E$ and $H_{i,\z}$ respectively, 
and let $\xi$ be decomposed as $\xi=\xi_1+\cdots+\xi_k$, where 
$\xi_i \in (F_i)_z \otimes (H_i^*)_{\zeta}$. 
Then we let
\begin{eqnarray*}
\gamma_{1,i}(\xi \otimes v):=-\pi_{H_{i,z}}(\debar\varphi_i(\xi_i(v))).
\end{eqnarray*}
As a linear operator $\Gamma(X \times X, E \otimes H_{i,\z}) \rightarrow
\Gamma(X \times X, H_{i,z}\otimes T^*_{0,1})$ is $C^{\infty}(X \times X)$-linear
(cf. \cite{GSS}), and hence defines a section of the appropriate bundle.
\begin{proposition}
The section $G_i=G_i(z,\zeta):=\gamma_{0,i}+\gamma_{1,i}$ is a weight for the
tautological vector bundle $H_i$.
\end{proposition}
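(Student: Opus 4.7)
The plan is to verify the two defining conditions of a weight for $H_i$: the boundary condition $\gamma_{0,i}|_\Delta = \mathrm{Id}$ and the flatness equation $\nabla_\eta G_i = 0$. The boundary condition is immediate, since on the diagonal the fibres $H_{i,z}$ and $H_{i,\zeta}$ coincide and $\pi_{H_{i,z}}$ restricts to the identity on $H_{i,\zeta}$.

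For the flatness equation, $\delta_\eta \gamma_{0,i}=0$ because $\gamma_{0,i}$ has no $E^*$-component, so $\nabla_\eta G_i$ decomposes by $E^*$-degree into two identities $\delta_\eta\gamma_{1,i} = \debar\gamma_{0,i}$ and $\debar\gamma_{1,i}=0$. The first of these is the main calculation. Using the identification $\varphi_i\circ q_{i,z}=\pi_{H_{i,z}^\perp}$, I rewrite $\gamma_{0,i}(v)=v-\varphi_i(q_{i,z}v)$ for $v \in H_{i,\zeta}$ and differentiate: since $v$ is constant in $z$ and $q_{i,z}v$ is holomorphic in $z$, Leibniz gives $\debar\gamma_{0,i}(v)=-(\debar\varphi_i)(q_{i,z}v)$. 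On the other side, expanding $\eta=\sum_j\eta_j$ and using that only the $\eta_i$-component appears in the pairing with $\gamma_{1,i}$, together with $\eta_i(v)=q_{i,z}(v)$, yields $\delta_\eta\gamma_{1,i}(v)=-\pi_{H_{i,z}}\bigl((\debar\varphi_i)(q_{i,z}v)\bigr)$. To see that these agree, observe that $(\debar\varphi_i)(s)\in H_{i,z}$ for every $s \in F_{i,z}$: the Chern $\debar$-operator on the quotient bundle $F_i$ is $\debar_{F_i}s = q_{i,z}(\debar\tilde s)$ for any smooth lift $\tilde s$ to $X\times\C^N$, and taking $\tilde s=\varphi_i(s)$ for a holomorphic section $s$ forces $\debar\varphi_i(s)\in\ker q_{i,z}=H_{i,z}$. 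Consequently the projection in the formula for $\delta_\eta\gamma_{1,i}$ is redundant and both sides equal $-(\debar\varphi_i)(q_{i,z}v)$.

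The second identity $\debar\gamma_{1,i}=0$ is direct: for locally holomorphic sections $\xi$ of $E$ and $v$ of $H_{i,\zeta}$, the product $\xi_iv$ is a holomorphic section of $F_i$, so $\debar(\xi_iv)=0$; combined with $\debar^2\varphi_i=0$, Leibniz gives $\debar\gamma_{1,i}(\xi\otimes v)=0$. The main obstacle is conceptual rather than computational: one has to keep careful track of the several $\debar$-operators involved (the ambient $\C^N$-valued $\debar$ versus the Chern $\debar$ on $H_i$, $F_i$, and $\Hom(F_i,\C^N)$) and exploit the identification $\varphi_i\colon F_i\cong H_i^\perp$ to recognise that $(\debar\varphi_i)(s)$ automatically lands in $H_{i,z}$; once this is in hand the identities reduce cleanly to the Leibniz rule and $\debar^2=0$.
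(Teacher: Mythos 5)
Your proof is correct and follows essentially the same route as the paper: the observation that only the $\eta_i$-component survives the contraction with $\gamma_{1,i}$ is exactly the identity $\delta_\eta\gamma_{1,i}=\delta_{\eta_i}\gamma_{1,i}$ on which the paper's one-line proof rests, after which the argument is the Grassmannian computation that the paper simply cites from \cite{GSS}. You have written that computation out in full (including the useful point that $(\debar\varphi_i)(s)$ already lies in $H_{i,z}$, making the projection in $\gamma_{1,i}$ redundant), which is a self-contained version of the same argument rather than a different one.
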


\begin{proof}
In view of the identity $\delta_{\eta}\gamma_{1,i}=\delta_{\eta_i}\gamma_{1,i}$, the
statement follows immediately from the proof of the analogous statement for the
Grassmannian of $d_i$-dimensional subspaces of $\C^N$ (cf. \cite{GSS}).
\end{proof}

\subsection{Weights for line bundles}
We recall from \cite{GSS} that we can form exterior powers, tensor powers, and
duals  of weights. The following proposition is an immediate consequence
of this construction.
\begin{proposition}
Define the section $g_i:=\wedge^{d_i}G_i$. Then we have
\begin{itemize}
\item[a)] $g_i$ is a weight for the line bundle $L_i$,
\item[b)] the section $g_i^{-1}:=g_i^*$ is a weight for $L_i^*$,
\item[c)] the section $g_1^{m_1} \otimes g_k^{m_k}$ 
is a weight for the line bundle $L$ with factorization 
$L=L_1^{m_1} \otimes \cdots \otimes L^{m_k}$, 
into integer powers of the generators $L_i$.
\end{itemize}
\end{proposition}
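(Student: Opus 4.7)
The plan is to deduce all three statements directly from the preceding proposition (which establishes that $G_i$ is a weight for the tautological bundle $H_i$) together with the functorial operations on weights recalled from \cite{GSS}, namely exterior powers, duals and tensor products. Since these operations are known to preserve the two defining properties of a weight, $\nabla_\eta g=0$ and $g_{0,0}|_\Delta=\mathrm{Id}$, the proof is essentially a bookkeeping exercise combined with the identifications of the Picard-group section.

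For part (a), I would iterate the exterior-power operation $d_i$ times on the weight $G_i$ to conclude that $\wedge^{d_i}G_i$ is a weight for $\wedge^{d_i}H_i$. The key checks are a Leibniz rule $\nabla_\eta(\wedge^{d_i}G_i)=0$, which follows from $\nabla_\eta G_i=0$, and the identity $(\wedge^{d_i}G_i)_{0,0}|_\Delta=\wedge^{d_i}\mathrm{Id}_{H_i}=\mathrm{Id}_{\wedge^{d_i}H_i}$, which follows from $(G_i)_{0,0}|_\Delta=\mathrm{Id}_{H_i}$. Combining this with the isomorphism $L_i\cong\det H_i=\wedge^{d_i}H_i$ recorded in Subsection 2.3 yields that $g_i=\wedge^{d_i}G_i$ is a weight for $L_i$.

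Part (b) is then immediate from the dual construction for weights: the dual of a weight for $V$ is a weight for $V^*$. Applied to $g_i$, this produces a weight $g_i^{-1}=g_i^*$ for the dual line bundle $L_i^*$. For part (c), I would first promote parts (a) and (b) by tensor powers: for each integer $m_i\in\Z$, the section $g_i^{m_i}$ (interpreted as $(g_i^{-1})^{|m_i|}$ when $m_i<0$) is a weight for $L_i^{m_i}$. Then the tensor-product construction on weights shows that $g_1^{m_1}\otimes\cdots\otimes g_k^{m_k}$ is a weight for $L_1^{m_1}\otimes\cdots\otimes L_k^{m_k}=L$.

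There is no real obstacle: every step is either an invocation of the weight calculus from \cite{GSS} or an appeal to the identifications of the line bundles $L_i$ in Subsection 2.3. The only subtlety worth being explicit about is that these algebraic operations on sections of $G_{E,V}$ are indeed compatible with $\nabla_\eta=\delta_\eta-\debar$, which is precisely why the exterior/dual/tensor constructions were isolated as preserving weights in the first place.
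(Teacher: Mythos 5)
Your proposal is correct and follows exactly the route the paper intends: the paper itself states the proposition as ``an immediate consequence'' of the exterior-power, dual, and tensor-product operations on weights recalled from \cite{GSS}, combined with the identification $L_i\cong\wedge^{d_i}H_i$. You have simply made explicit the bookkeeping that the paper leaves to the reader.
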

\section{Applications}

\subsection{Vanishing theorems}
In this section we prove a vanishing theorem of the Bott-Borel-Weil type 
for Dolbeault cohomology. We reduce the problem to the case of Grassmannians
by fiber bundle techniques. In the Grassmannian case the corresponding 
vanishing theorems are proved explicitly by solving
the $\debar$-equation using Koppelman formulas. 

\begin{theorem} \label{manzanita}
For $i=1,\ldots, k$,  the cohomology group $H^{q}(X, L_i^r)$ is trivial for 
$r \leq 0$ and $q>0$.

%\begin{itemize}
%\item[a)] $p \neq q$ and $r = 0$.

%\item[b)] $q<p$ and $r > 0$.

%\item[c)] $q>p+rd_i$ and $r > 0$.

%\item[d)] $q>p$ and $r < 0$.

%\item[e)] $q<p-rd_i$, and $r < 0$.
%\end{itemize}
\end{theorem}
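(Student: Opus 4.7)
The plan is to reduce the statement to the Grassmannian case handled in \cite{GSS} by a fiber bundle argument, and then to solve $\debar$ explicitly on the Grassmannian by applying the weighted Koppelman formula \eqref{KoP} with the weight for $L_i^r$ from the proposition above.

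First I would observe that the line bundle $L_i = \det H_i$ depends only on the $i$-th subspace in the flag, so the forgetful projection $\pi_i \colon X \to \mathrm{Gr}(d_i, N)$, $(V_1, \ldots, V_k) \mapsto V_i$, is a $G$-equivariant holomorphic fibration, and there is a line bundle $\mathcal{L}_i$ on $\mathrm{Gr}(d_i, N)$ with $L_i = \pi_i^{\ast} \mathcal{L}_i$. The fiber of $\pi_i$ over a point $V_i$ is isomorphic to the product of the flag manifold of flags of type $(d_1, \ldots, d_{i-1})$ inside $V_i$ with the flag manifold of flags of type $(d_{i+1} - d_i, \ldots, d_{k-1} - d_i)$ inside $\mathbb{C}^N / V_i$. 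Each factor is again a flag manifold, hence rational, so the cohomology $H^j$ of the fiber with coefficients in the structure sheaf vanishes for $j > 0$. The projection formula together with the Leray spectral sequence then gives
\[
H^q(X, L_i^r) \;\cong\; H^q(\mathrm{Gr}(d_i, N), \mathcal{L}_i^r).
\]
The degenerate case $i = k$ (where $L_k$ is trivial) reduces to $H^q(X, \mathcal{O}) = 0$, which also follows from rationality of $X$.

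It remains to verify the vanishing on the right-hand side for $r \leq 0$ and $q > 0$. Here I would take $\Omega$ to be the whole Grassmannian (so the boundary integral in \eqref{huvudformel} disappears) and apply the weighted Koppelman formula \eqref{KoP} with the weight $g_i^r$ from the preceding proposition, interpreted as $(g_i^{-1})^{|r|}$ when $r < 0$ and as the trivial weight when $r = 0$. For a $\debar$-closed $(0,q)$-form $\varphi$ with values in $\mathcal{L}_i^r$ this produces
\[
\varphi \;=\; \debar_z \int_{\mathrm{Gr}(d_i, N)} K_{g_i^r} \wedge \varphi \;+\; \int_{\mathrm{Gr}(d_i, N)} P_{g_i^r} \wedge \varphi,
\]
so that $u := \int_{\mathrm{Gr}(d_i,N)} K_{g_i^r} \wedge \varphi$ is an explicit primitive for $\debar u = \varphi$ as soon as the last integral vanishes.

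The main obstacle is precisely this vanishing, which is the technical input I would import from \cite{GSS}. It comes down to a bidegree count for the smooth kernel $P_{g_i^r}(z,\zeta)$: one has to see that its component of bidegree $(0,q)$ in $z$ and $(n, n-q)$ in $\zeta$ is identically zero. For $r \leq 0$ the weight $g_i^r$ carries only antiholomorphic differentials coming from the duals $g_i^{-1}$ (none at all when $r = 0$), and the number of antiholomorphic $z$-differentials that can be produced by $g_i^r$ together with the $D_E\eta$- and $\Theta_E$-factors in $P_{g_i^r}$ turns out to be too small to reach a nontrivial $(0,q)$-in-$z$ component for $q \geq 1$. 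The fiber bundle reduction in the earlier paragraphs is essentially formal; the weight-degree argument is the genuine content.
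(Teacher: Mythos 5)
Your proposal is correct and follows essentially the same route as the paper: reduce to the Grassmannian $\mathrm{Gr}(d_i,N)$ via the forgetful fibration, kill the higher direct images using the product structure of the fibers, apply the Leray spectral sequence to get $H^q(X,L_i^r)\cong H^q(\mathrm{Gr}(d_i,N),\widetilde{L_i}^r)$, and then invoke the weighted Koppelman formula with the weight $(g_i^{-1})^{|r|}$ together with the bidegree count for $P_{g_i^r}$ from \cite[Thm.~12]{GSS}. The only cosmetic difference is that you justify $H^{j}(\text{fiber},\mathcal{O})=0$ by rationality of the fiber factors, whereas the paper runs an induction on $\dim X$ whose base case is again settled by a Koppelman formula.
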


\begin{proof}
Let $Y:=\mbox{Gr}(d_i,N)$ be the Grassmannian of $d_i$-dimensional subspaces 
of $\C^n$, and let $\widetilde{L_i} \rightarrow Y$ be the determinant of 
the tautological vector bundle over $Y$. 
Let $\tau:X \rightarrow Y$ be the natural projection $(V_1,\ldots, V_k) \mapsto V_i$.
Then $\tau$ defines a holomorphic fiber bundle with fiber 
\begin{equation}
\tau^{-1}(W)=X_W^1 \times X_W^2, \label{E: fiber}
\end{equation}
where 
$X^1_W$ is the manifold of all flags $V_1 \subset \ldots \subset V_{i-1} \subset W$ 
with $\mbox{dim} V_j=d_j$, for $j=1,\ldots, i-1$, and $X^2_W$ is the manifold of all 
flags $W \subset V_{i+1} \subset \ldots \subset V_k=\C^N$, with 
$\mbox{dim} V_j=d_j$, $j=i+1,\ldots ,k$. Moreover, $\tau^*\widetilde{L_i}=L_i$.
Let $\{U_j\}$ be an open cover of $Y$ such that both the bundles 
$\widetilde{L_i} \rightarrow Y$ and $X \rightarrow Y$ are locally 
trivial over each $U_j$, and such that $U_j$ is biholomorphically equivalent 
to an open ball in $\C^m$ (where $m$=dim $Y$).
Since $\tau$ has compact and connected fibers, the identity 
\begin{equation}
H^0(\tau^{-1}(O),L_i^r) \cong H^0(O, \widetilde{L_i}^r) \label{E: localiso}
\end{equation}
holds for any sufficiently small open ball $O \subseteq Y$, and $r \in \Z$.
Let $\mathcal{F}(r)$ denote the sheaf of germs of holomorphic sections of
$L_i^r$, and let $\widetilde{\mathcal{F}}(r)$ denote the sheaf of germs of holomorphic 
sections of $\widetilde{L_i}^r$. 
From the isomorphism \eqref{E: localiso} it follows that we have the
isomorphism 
\begin{equation*}
\tau_*\mathcal{F}(r) \cong \widetilde{\mathcal{F}}(r), 
\end{equation*}
of sheaves over $Y$.
Here $\tau_*$ is the direct image of $\mathcal{F}(r)$.
Recall that that the $q$th direct image of $\mathcal{F}(r)$, $R^q\tau_*\mathcal{F}(r)$,  
is the sheafification of the presheaf $O \mapsto H^q(\tau^{-1}(O),\mathcal{F}(r))$ 
over $Y$. 
For a sufficiently small open ball $O$, {\it e.g.}, for $O \subseteq U_j$ for some 
$j$, we have
\begin{equation}
H^q(\tau^{-1}(O),\mathcal{F}(r)) \cong H^q(O \times X_W^1 \times X_W^2, \mathcal{O}),
\label{E: cohfiber}
\end{equation}
where the right hand side denotes the cohomology of the holomorphic structure
sheaf, i.e., the cohomology of the trivial line bundle. 
We can now prove by induction on $n=\mbox{dim}X$ that $H^q(X,L_i^r)=0$ for 
$r \leq 0$. 
If $n=1$, then $X=\mbox{P}^1(\C)$, and the statement is proved by 
explicitly by solving the $\debar$-equation using a Koppelman formula
in \cite[Thm. 12]{GSS}. 
For the induction step, assume that the claim is proved for 
all manifolds of flags of dimension strictly smaller than 
$n$. In particular, $H^q(X_W^1,\mathcal{O})=0$, and $H^q(X_W^2,\mathcal{O})=0$ for 
$q>0$ (cf. \eqref{E: fiber}). By the K\" unneth formula, we now have
\begin{equation*}
H^q(X_W^1 \times X_W^2,\mathcal{O})=
\bigoplus_{q_1+q_2=q}H^{q_1}(X_W^1,\mathcal{O}) 
\otimes H^{q_2}(X_W^2,\mathcal{O})=0
\end{equation*} 
if $q>0$.
Since $O$ is Stein we then also have 
$H^q(O \times X_W^1 \times X_W^2, \mathcal{O})=0$ for $q>0$ 
(cf. \cite[Ch. IX, Cor. 5.22]{Demailly}), so that the identity 
\eqref{E: cohfiber} yields that $R^q\tau_*\mathcal{F}(r)=0$ for any $r \in \Z$. 
From the Leray spectral sequence it thus follows that for $q>0$ we have the 
isomorphism
\begin{equation*}
H^q(X,L_i^r) \cong H^q(Y,\widetilde{L_i}^r)
\end{equation*}
(cf. \cite[p.234, Cor. 13.9]{Demailly}).
In \cite[Thm. 12]{GSS}, the claim that $H^q(Y,\widetilde{L_i}^r)=0$ 
for $q>0$ and $r \leq 0$ is proved by solving the $\debar$-equation 
explicitly using a Koppelman formula. 
The statement therefore follows from the induction hypothesis. 

\end{proof}

\subsection{Harmonic forms}

In this section we will construct reproducing kernels for harmonic forms in 
%\marginpar{Vad gar fel for allmanna flagmfld?}
the special case of Grassmannians. Let therefore $X:=G(m,N)$ be the Grassmannian
of $m$-dimensional subspaces of $\C^N$, and let $A^{p,q}(X)$ denote 
the space of $(p,q)$-forms on $X$.

We recall the Hodge-decomposition
\begin{eqnarray}
A^{p,q}(X)=\mathcal{H}^{p,q}(X) \oplus 
\debar A^{p,q-1}(X) \oplus
\debar ^*A^{p,q+1}(X) \label{hodge}
\end{eqnarray}
into an orthogonal sum with respect to the $L^2$-inner product 
\begin{equation*}
\langle \varphi, \psi \rangle:= \int_X \varphi \wedge *\psi, 
\end{equation*}
where $*$ denotes the (antilinear) Hodge operator defined by a 
$U(N)$-invariant Hermitian metric on $T(X)$. Here $\mathcal{H}^{p,q}(X)$
denotes the harmonic $(p,q)$-forms, i.e., the kernel of the
operator 
$\debar\debar^*+\debar^*\debar: A^{p,g}(X) \rightarrow A^{p,q}(X)$.

\begin{theorem}
The operator
$A^{p,q}(X) \rightarrow \mathcal{H}^{p,q}(X), \,\,
\varphi \mapsto \int_{\zeta} \varphi\wedge P$
is the orthogonal projection onto the space 
of harmonic $(p,q)$-forms.
\end{theorem}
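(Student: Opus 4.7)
Set $T\varphi(z) := \int_\zeta \varphi(\zeta)\wedge P(z,\zeta)$. The plan is to identify $P$ with the $L^2$-reproducing kernel of the harmonic space $\mathcal{H}^{*,*}(X)$, from which the theorem is immediate. The central ingredient is that $P$ is \emph{harmonic} as a form on the product $X\times X$.

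To prove harmonicity of $P$, recall that $X = G(m,N)$ is a compact Hermitian symmetric space under $U(N)$, and hence $X\times X$ is one under $U(N)\times U(N)$. The bundle $E = F_z\otimes H_\zeta^*$ is equivariant under the product action: $F_z$ is pulled back from the first factor (so invariant under the second copy of $U(N)$ trivially) and $H_\zeta^*$ from the second. The Hermitian metric on $E$ induced from the standard Euclidean metric on $\C^N$ is likewise invariant under both factors. Consequently $P = c_n(E)$ is a $d$-closed $U(N)\times U(N)$-invariant form, and on a compact Hermitian symmetric space every such form is automatically harmonic: averaging any smooth representative over the compact symmetry group produces an invariant representative of the same cohomology class, which by uniqueness must be the harmonic one.

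Now we identify $P$ with the reproducing kernel. From $\debar K = [\Delta]-P$ we read off $[P]=[\Delta]$ in $H^{n,n}(X\times X)$, so $P$ is the harmonic representative of the diagonal. Using the K\"unneth formula together with the vanishing $\mathcal{H}^{p,q}(X) = 0$ for $p\ne q$, we decompose $P=\sum_p P_p$ with $P_p$ harmonic and representing the K\"unneth component $[\Delta]_p \in H^{p,p}(X)\otimes H^{n-p,n-p}(X)$ of the diagonal. This component equals $\sum_\lambda [h_\lambda]\otimes [h_\lambda^\vee]$ for any basis $\{h_\lambda\}$ of $\mathcal{H}^{p,p}(X)$ and its Poincar\'e-dual basis $\{h_\lambda^\vee\}\subset \mathcal{H}^{n-p,n-p}(X)$. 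Choosing $\{h_\lambda\}$ to be $L^2$-orthonormal and real-valued, the identity $\int_X h_\lambda\wedge *h_\mu = \langle h_\mu,h_\lambda\rangle = \delta_{\lambda\mu}$ forces $h_\lambda^\vee = *h_\lambda$. Therefore
\[
T\varphi(z) = \int_\zeta \varphi\wedge P = \sum_\lambda h_\lambda(z)\int_X \varphi\wedge *h_\lambda = \sum_\lambda \langle\varphi,h_\lambda\rangle\,h_\lambda(z),
\]
which is precisely the orthogonal projection of $\varphi$ onto $\mathcal{H}^{p,p}(X)$. For $(p,q)$ with $p\ne q$, both sides vanish trivially since $\mathcal{H}^{p,q}=0$ and no K\"unneth component of $P$ has the required bidegree.

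The main obstacle is the verification that $P$ is invariant under the full product group $U(N)\times U(N)$ rather than merely the diagonal; this rests on the fact that each of $F_z$ and $H_\zeta^*$ is trivially invariant under the factor of $U(N)$ acting on the other variable, together with the compatibility of the induced metric on $E$ with this enlarged symmetry. Once this is established, harmonicity of $P$ on $X\times X$ follows by the averaging argument, and the remainder is standard Hodge theory on a compact K\"ahler manifold.
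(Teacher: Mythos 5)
Your strategy is sound but genuinely different from the paper's. The paper never identifies $P$ explicitly: it verifies the three defining properties of the orthogonal projection one at a time --- harmonic $\varphi$ are reproduced because the Koppelman formula reduces to $\varphi=\debar\int\varphi\wedge K+\int\varphi\wedge P$ and the Hodge decomposition forces $\debar\int\varphi\wedge K=0$; forms in $\debar A^{p,q-1}(X)$ are annihilated by Stokes' theorem since $\zeta\mapsto P(z,\zeta)$ is $\debar$-closed; and forms in $\debar^*A^{p,q+1}(X)$ are annihilated by moving $\debar^*$ across the pairing, using that $P$ is $\debar^*$-closed in $\zeta$. You instead pin down $P$ itself as the kernel $\sum_\lambda h_\lambda\boxtimes *h_\lambda$, using $[P]=[\Delta]$ (which follows from $\debar K=[\Delta]-P$ together with the $\partial\debar$-lemma, a step worth making explicit since $\debar$-exactness of a current must be upgraded to $d$-exactness), harmonicity of $P$ on the product, the K\"unneth description of harmonic forms on a Riemannian product, and the vanishing $h^{p,q}(X)=0$ for $p\ne q$. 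This proves a stronger statement and yields the theorem in one stroke; the price is a heavier reliance on the Hodge theory of the Grassmannian (the off-diagonal vanishing of Hodge numbers, which the paper's argument never uses), while the gain is that the Stokes and adjointness computations disappear. Both arguments rest on the same key input, the $U(N)\times U(N)$-invariance of $P=c_n(E)$.

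One step of your argument needs repair: the justification that an invariant form on a compact symmetric space is harmonic. Averaging a representative of a cohomology class over the isometry group produces an invariant representative, and uniqueness of the \emph{harmonic} representative shows that the harmonic representative is itself invariant (the Laplacian commutes with isometries); it does \emph{not} show that every invariant representative is harmonic, since a class may a priori admit many invariant representatives (on a homogeneous but non-symmetric space it typically does, which is why invariant forms there need not be harmonic). The correct argument is the classical one the paper cites from Helgason: the geodesic symmetry at a point acts by $(-1)^k$ on $k$-covectors, so the differential of an invariant form is an invariant form of opposite parity vanishing at that point, hence everywhere; thus invariant forms are closed, and, applied to $*\omega$, co-closed, hence harmonic. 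With that substitution your proof is complete.
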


\begin{proof}
To begin with, observe that the curvature tensor $\Theta$ of 
$E=F_z \otimes H_{\zeta}^*$ is associated to $U(N)$-invariant metrics 
on the respective bundles, and is therefore
$U(N) \times U(N)$-invariant. 
Since $X$ is a Riemannian symmetric space, it follows, e.g., from
\cite[Exercise B2, p.227]{helg}, that a $U(N)$-invariant $(p,q)$-form on $X$ is 
harmonic with respect to the Laplacian $dd^*+d^*d$. But since $X$ is K\"{a}hler we have
\begin{equation*}
\debar \debar^*+\debar^*\debar=\frac{1}{2}(dd^*+d^*d)
\end{equation*}
and we see that $U(N)$-invariant $(p,q)$-forms indeed are harmonic.
It follows that $z\mapsto P(z,\zeta)$, $\zeta\mapsto P(z,\zeta)$, and $\int\varphi\wedge P$
are harmonic for any $\varphi\in A^{p,q}(X)$.

Secondly, if $\varphi$ is harmonic, the Koppelman formula yields
\begin{equation*}
\varphi=\debar \int \varphi \wedge K+\int \varphi \wedge P.
\end{equation*}
Since the second term on the right hand side is harmonic, it
follows from \eqref{hodge} that $\debar \int \varphi \wedge K=0$, i.e., 
thet $\varphi=\int  \varphi \wedge P$.

Assume now that $\varphi=\debar \psi$ for some $\psi \in A^{p,q-1}(X)$. 
Then, since $\zeta\mapsto P(z,\zeta)$ is $\debar$-closed,
\begin{eqnarray*}
\int \debar \psi\wedge P=\int \debar (\psi \wedge P)=\int d(\psi \wedge P)=0.
\end{eqnarray*}

Finally, if $\varphi=\debar^* \psi$ for some $\psi \in A^{p,q+1}(X)$, 
\begin{eqnarray*}
\int \debar^*\varphi \wedge P=\pm \langle \debar^*\psi, *P \rangle
=\pm \langle \psi, \debar * P \rangle
=\pm \langle  \psi, *\debar^* P\rangle
=0
\end{eqnarray*}
since $\zeta\mapsto P(z,\zeta)$ is $\debar^*$-closed.
This proves the theorem.
\end{proof}


\begin{thebibliography}{99}

%\bibitem{MA1} \textsc{M. Andersson:} Integral representation with weights
%I. \textit{Math. Ann.}, \textbf{326(1)} (2003), 1--18. 

%\bibitem{penrose} \textsc{R. J. Baston and M. G. Eastwood:}
%The {P}enrose transform: Its interaction with representation theory.
%\textit{Oxford Science Publications}, Clarendon Press, Oxford, 1989.
    
%\bibitem{BY2} \textsc{C. Berenstein, R. Gay, A. Vidras and A. Yger:} Residue
%    currents and Bezout identities. \textit{Progress in Mathematics 114},
%Birkhuser Verlag, Basel, 1993.


%\bibitem{bob1} \textsc{B. Berndtsson:} Cauchy-Leray forms and vector bundles.
%\textit{Ann. Sci. cole Norm. Sup. (4)}, \textbf{24(3)} (1991), 319--337.

%\bibitem{BE2} \textsc{B. Berndtsson:} Integral formulas on projective 
%space and the Radon transform of Gindikin-Henkin-Polyakov. \textit{Publ. 
%Mat.}, \textbf{32(1)} (1988), 7--41.

%\bibitem{AB} \textsc{B. Berndtsson and M. Andersson:} Henkin-Ramirez
%    formulas with weight factors.  \textit{Ann. Inst. Fourier (Grenoble)}, \textbf{32(3)}
%  (1982), v--vi, 91--110. 

\bibitem{Demailly} \textsc{J.-P. Demailly:} Complex Analytic and Differential Geometry. 
Online book, available at: http://www-fourier.ujf-grenoble.fr/$\sim$demailly/.

\bibitem{elin} \textsc{E. G\"{o}tmark:} Weighted integral formulas on manifolds. 
\textit{Ark. Mat.}{\bf 46} (2008), 1, 43--68.

\bibitem{GSS}
\textsc{E. G\"{o}tmark, H. Samuelsson, and H. Sepp\"anen :} Koppelman formulas
on Grassmannians. 
\textit{J. Reine Angew. Math.} {\bf 640}  (2010), 101--115.

\bibitem{helg}
\textsc{S. Helgason:}
Differential geometry, Lie groups, and symmetric spaces,
\textit{Academic Press Inc.}, New York, 1978.



%\bibitem{HL} \textsc{G. M. Henkin and J. Leiterer:} Global integral formulas
%for solving the $\bar \partial $-equation on Stein manifolds,
%\textit{Ann. Polon. Math.}, \textbf{39} (1981), 93--116. 



%\bibitem{lePotier} \textsc{J. le Potier:} Cohomologie de la grassmannienne  valeurs dans les puissances
%extrieures et symtriques du fibr universel.
%\textit{Math. Ann.}, \textbf{226(3)} (1977), 257--270.

\bibitem{pragacz} 
\textsc{P. Pragacz:} Miscellany on the zero schemes of
sections of vector bundles.
\textit{Algebraic cycles, sheaves, shtukas,
and moduli, Trends in Mathematics}. Birkh\"auser (2007), 105--116.
    	

%\bibitem{QU} \textsc{D. Quillen:} Superconnections and the Chern character.
%\textit{Topology}, \textbf{24(1)}, (1985), 89--95.


%\bibitem{satake} \textsc{I. Satake:} Algebraic structures of symmetric domains.
%\textit{Kan\^o Memorial Lectures},
%\textbf{4}, Iwanami Shoten, Tokyo, 1980.


     

%\bibitem{snow0} \textsc{D. M. Snow:} Cohomology of twisted holomorphic forms on Grassmann
%manifolds and quadric hypersurfaces.
%\textit{Math. Ann.}, \textbf{276(1)} (1986), 159--176. 	

\bibitem{snow}
\textsc{D. M. Snow:} Homogeneous vector bundles.
\textit{Group actions and invariant theory (Montreal, PQ, 1988)},
CMS Conf. Proc., \textbf{10}, 193--205, Amer. Math. Soc.,
Providence, RI, 1989.





\end{thebibliography}
\end{document}